\newtheorem{theorem}{Theorem}[section]
\newtheorem{proposition}[theorem]{Proposition}
\theoremstyle{definition}
\theoremstyle{remark}
\newtheorem{remark}[theorem]{Remark}
\numberwithin{equation}{section}
\title[On stable capillary hypersurfaces with planar boundaries]{On stable capillary hypersurfaces with planar boundaries}
\author{Rabah Souam}
\address{Institut de Math\'{e}matiques de Jussieu-Paris Rive Gauche,   CNRS et Universit\'e de Paris, B\^{a}timent Sophie Germain,  Case 7012, 75205  Paris Cedex 13, France}
\email{rabah.souam@imj-prg.fr}
\subjclass[2020]{49Q10, 53A10}
\keywords{Capillary hypersurfaces, constant mean curvature hypersurfaces, stability.}
\begin{document}
\maketitle

\begin{abstract}
We study  stable immersed capillary hypersurfaces $\Sigma$ in domains $\mathcal B$ of $\Bbb R^{n+1}$ bounded by hyperplanes. When $\mathcal B$ is a half-space, we show $\Sigma$ is a spherical cap. When $\mathcal B$ is a domain bounded by 
$k$ hyperplanes $P_1,\ldots,P_k,\, 2\leq k\leq n+1,$ having independent normals, and $\Sigma$ has contact angle $\theta_i$ with  $P_i$ and does not touch the edges of $\mathcal B,$ we prove there exists $\delta>0,$ depending only on $P_1,\dots,P_k,$ so that if 
$\theta_i\in (\frac{\pi}{2}-\delta,  \frac{\pi}{2}+\delta)$ for each $i,$ then $\Sigma$ has to be a piece of a sphere.

\end{abstract}

\section{Introduction}\label{sec:introduction}

A capillary hypersurface in a domain $\mathcal B$ of a Riemannian manifold is a constant mean curvature (CMC)  hypersurface in $\mathcal B$ 
meeting $\partial\mathcal B$ at a constant angle. Capillary hypersurfaces are critical points for an energy functional for variations which preserve the {\it enclosed volume}.  
More precisely, given an angle $\theta\in(0,\pi),$ for a compact hypersurface  $\Sigma$ inside $\mathcal B$ such that $\partial \Sigma \subset \partial \mathcal B$ and $\partial \Sigma$ bounds a compact domain $W$ in
$\partial \mathcal B,$ the energy of $\Sigma$ is  by definition the quantity
\[ E(\Sigma):= |\Sigma| -\cos\theta \,  |W|.
\]
Here and throughout this paper,  we use the notation $|M|$ to denote the volume of a Riemannian manifold $M.$ 
The  energy functional $E$ can be defined for general immersed hypersurfaces, not only those with boundary bounding a compact domain in $\partial\mathcal B.$ Details can be found in \cite{A-S}. 
The stationary hypersurfaces of $E$ for variations preserving the {\it enclosed volume} are precisely the 
CMC hypersurfaces  making a constant angle $\theta$ with  $\partial \mathcal B.$  

Existence and uniqueness problems of capillary hypersurfaces are  interesting on their own right.  In the minimal case,   when $\mathcal B$ is the unit ball in $\Bbb R^3$ and the angle of contact is $\frac{\pi}{2},$ these questions proved to be related to deep problems in Geometric Analysis, see for instance the work of Fraser-Schoen \cite{fraser-schoen}. When $\mathcal B$ is a domain in $\Bbb R^3,$ capillary surfaces correspond to models of incompressible liquids inside a container $\mathcal B$ in the absence of gravity. The free interface of the liquid (locally) minimizes the energy functional $E.$ It is then a natural problem to study the stable ones, that is, those for which the second variation of the energy functional is nonnegative for all volume preserving variations.  Stability issues for capillary hypersurfaces have been  recently actively addressed in different ambient manifolds and domains, see for instance \cite{A-S, choe-koiso, li-xiong, marinov, nunes, souam, wang-xia}. 
One of the achievements in this direction is the proof  by Wang-Xia \cite{wang-xia}  that in a unit ball in a space form, totally umbilical hypersurfaces are the only stable capillary hypersurfaces (Nunes \cite{nunes}  previously solved  the free boundary case  in a unit ball in $\Bbb R^3$).

 In this paper, we are interested in domains $\mathcal B$  in $\Bbb R^{n+1}$  bounded by a finite family of hyperplanes. The first case we consider is when  $\mathcal B$ is a half-space in $\Bbb R^{n+1}.$ In this case, it was shown by  Ainouz-Souam \cite{A-S} that a stable capillary hypersurface with contact angle $\theta< \frac{\pi}{2}$ is a spherical cap provided each of its boundary components is embedded. Choe-Koiso \cite{choe-koiso} obtained the same result when $\theta > \frac{\pi}{2}$ under the assumption that the boundary  is convex and  Marinov \cite{marinov}  treated the case  $n=2$  assuming the boundary is embedded. In our main result (Theorem \ref{thm:half-space}) we remove the boundary embeddedness assumption and characterize the spherical caps as the only stable immersed capillary hypersurfaces supported by a hyperplane  in $\Bbb R^{n+1}, n\geq 2.$ The second case we study is when  $\mathcal B$ is a domain in $\Bbb R^{n+1}$ bounded by a finite number of hyperplanes $P_1,\ldots, P_k$ with linearly independent normals.   Li-Xiong \cite{li-xiong} showed that, in this situation, a stable capillary hypersurface meeting each $P_i$ with a constant angle $\theta_i\in [\pi/2,\pi]$ and not touching the edges of $\mathcal B$ is a piece of a sphere under the assumption that its boundary is embedded for $n=2$ and that each of its boundary components is convex for $n\geq 3.$ The case $k=2,$ that is, when $\mathcal B$ is a wedge, was previously obtained by Choe-Koiso \cite{choe-koiso}. We here prove (Theorem \ref{thm:planar boundaries}) the existence of 
 a positive number $\delta>0$ such that if $\Sigma$ is a stable immersed capillary hypersurface in $\mathcal B$ not touching the edges of $\mathcal B$ and making an angle 
 $\theta_i\in (\frac{\pi}{2} -\delta, \frac{\pi}{2}+\delta)$ with each $P_i,$ then $\Sigma$ is a piece of a sphere. We emphasize that in our results we do not assume the boundary of the hypersurfaces is embedded. 

\section{Preliminaries}

We here recall briefly some basic facts about capillary hypersurfaces and refer  
to \cite{A-S} for more details. 

 Let $n\geq 2$ be an integer and $\mathcal B$ a domain in $\Bbb R^{n+1}$ bounded by a finite number $P_1,\ldots, P_k$  of hyperplanes, $k\geq 1.$ 
 A capillary hypersurface in $\mathcal B$ is a compact and constant mean curvature (CMC)  hypersurface  contained in $\mathcal B$  with boundary on $\partial \mathcal B$ and  meeting each $P_i$  at a constant angle $\theta_i\in(0,\pi).$   
Consider a  capillary hypersurface defined by a smooth immersion $\psi: \Sigma\longrightarrow \mathcal B$ and  let $N$ be a global  unit normal to $\Sigma$ along $\psi$ chosen so that its (constant) mean curvature satisfies $H\geq 0.$ Write $\partial\Sigma=\cup_{i=1}^k \Gamma_i,$ as a union of compact submanifolds with $\psi(\Gamma_i)\subset P_i.$ By an edge of $\mathcal B$ we mean a non empty intersection $P_i\cap P_j$ with  $i\neq j.$ 
We will suppose that the hypersurface does not touch the edges of $\mathcal B$ so that $\psi(\partial\Sigma)$ lies on the smooth parts of $\partial\mathcal B$ where the unit normal is well defined (see however the comments following Theorem \ref{thm:planar boundaries}). 

Capillary  immersions in $\mathcal B$ are critical points of an energy functional for deformations through immersions in 
$\mathcal B,$ mapping  $\partial\Sigma$ into $\partial\mathcal B$ and preserving the {\it enclosed volume}. When $\psi$  is an embedding with each  $\psi(\Gamma_i)$ bounding a domain $W_i\subset P_i,$  the energy writes:
$$E(\psi)= |\psi(\Sigma)| -\sum_{i=1}^k \cos\theta_i |W_i|$$
 and the enclosed volume is the volume of the domain in $\Bbb R^{n+1}$ bounded by $\psi(\Sigma)$ and $\cup_{i=1}^k W_i.$ This energy functional and the notion of enclosed volume can be extended in a suitable way to include general immersions, details can be found in \cite{A-S}. 

We denote by $\bf n_i$ the exterior unit normal to $P_i.$ The angle of contact along $\Gamma_i$ is 
the angle $\theta_i \in (0,\pi)$  between $N$ and $\bf n_i$. We let $\nu_i$ be the exterior unit normal to $\Gamma_i$ in $\Sigma$ and  $\bar\nu_i$ be the unit normal to $\Gamma_i$ in $\partial\mathcal B$ chosen so that 
$\{N,\nu_i\}$ and $\{{\bf n_i }, \bar\nu_i\}$ have the same orientation in $(T\partial\Sigma)^{\perp}.$ The angle between  $\nu_i$ and $\bar\nu_i$ is also equal to $\theta_i.$

 The index form $\mathcal I$ of $\psi$ is the symmetric bilinear form defined on 
$\mathcal C^\infty(\Sigma)$  by
\[ \mathcal I (f,g)=-\int_{\Sigma} f\left(\Delta g+  |\sigma|^2\, g\right) dA+
 \sum_{i=1}^k \int_{\Gamma_i}f\left(\frac{\partial g}{\partial \nu_i}- q_ig\right)\,ds,
\]
where $\Delta$  stands for the Laplacian of the metric induced by $\psi,$  $\sigma$ is the second fundamental form of $\psi$ and  
\[ q_i=  \cot\theta_i \,\sigma(\nu_i,\nu_i). 
\]

 The  capillary immersion $\psi$ as above  is said to be  stable if  the second variation of the energy $E$ is nonnegative for all volume preserving deformations. This condition is  equivalent to  
 $$\mathcal I(f,f)\geq 0\quad {\text  {for all }}\quad f\in \mathcal C^\infty
 (\Sigma)\quad {\text  with} \quad \int_{\Sigma} f\,dA =0,$$
see \cite{A-S}. We note that we can take this as a definition of stability for capillary immersions supported by $\partial \mathcal B$ but not necessarily contained in $\mathcal B.$

Let $D$ denote the standard covariant  differentiation  on $\Bbb R^{n+1}.$ A basic well-known fact  we will use is the following (see for instance \cite{A-S}): 

\begin{proposition}\label{lem:umbilicity}
 Let $\mathcal B$ be as above and  $\psi:\Sigma\longrightarrow \mathcal B$  a capillary immersion into $\mathcal B.$ Then, the unit conormal $\nu_i$ along $\Gamma_i$  is a principal direction of $\psi,$ that is, $D_{\nu_i} N =-\sigma(\nu_i,\nu_i)\,\nu_i.$ 
\end{proposition}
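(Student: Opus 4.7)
The plan is to exploit the fact that each $P_i$ is a hyperplane, so its unit normal $\mathbf{n}_i$ is a constant vector field in $\R^{n+1}$, together with the constancy of the contact angle $\theta_i$ along $\Gamma_i$. This will force the shape operator to decouple the direction $\nu_i$ from the tangent directions of $\Gamma_i$.

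More concretely, I would argue as follows. Fix any $X\in T\Gamma_i$. Since $\mathbf{n}_i$ is a parallel vector field in $\R^{n+1}$ and $\langle N,\mathbf{n}_i\rangle=\cos\theta_i$ is constant along $\Gamma_i$, differentiating gives
\[
0 = X\langle N,\mathbf{n}_i\rangle = \langle D_X N,\mathbf{n}_i\rangle + \langle N, D_X \mathbf{n}_i\rangle = \langle D_X N,\mathbf{n}_i\rangle.
\]
Combined with $\langle D_X N,N\rangle=0$, this shows that $D_X N$ is orthogonal to the plane $(T\Gamma_i)^\perp = \mathrm{span}\{N,\mathbf{n}_i\} = \mathrm{span}\{N,\nu_i\}$ (the last equality using that both frames span the same $2$-plane normal to $\Gamma_i$).

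Next I would write $\mathbf{n}_i$ in the orthonormal basis $\{N,\nu_i\}$ of $(T\Gamma_i)^\perp$. The orientation convention on $\{N,\nu_i\}$ and $\{\mathbf{n}_i,\bar\nu_i\}$ and the angle $\theta_i$ between $N$ and $\mathbf{n}_i$ yield $\mathbf{n}_i = \cos\theta_i\,N - \sin\theta_i\,\nu_i$. Plugging this into $\langle D_X N,\mathbf{n}_i\rangle=0$ and using $\langle D_X N,N\rangle=0$ gives $\sin\theta_i\,\langle D_X N,\nu_i\rangle=0$, and since $\theta_i\in(0,\pi)$ we conclude $\langle D_X N,\nu_i\rangle=0$, i.e.\ $\sigma(X,\nu_i)=0$ for every $X\in T\Gamma_i$.

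Thus $\nu_i$ is an eigenvector of the Weingarten map along $\Gamma_i$, with eigenvalue $\sigma(\nu_i,\nu_i)$, so $D_{\nu_i}N=-\sigma(\nu_i,\nu_i)\,\nu_i$, which is the claim. No real obstacle is expected; the only point that needs care is the trigonometric expansion of $\mathbf{n}_i$ in the frame $\{N,\nu_i\}$, which relies crucially on the orientation convention fixed just before the proposition.
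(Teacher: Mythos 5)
Your argument is correct and is the standard one; note that the paper itself states this proposition without proof, citing \cite{A-S}, and the expected argument is exactly what you give: differentiate the constant quantity $\langle N,\mathbf{n}_i\rangle=\cos\theta_i$ along $\Gamma_i$ against the parallel field $\mathbf{n}_i$, observe that $D_XN$ is tangent to $\Sigma$ and lies in $\bigl(\mathrm{span}\{N,\mathbf{n}_i\}\bigr)^{\perp}\cap T\Sigma$, deduce $\sigma(X,\nu_i)=0$ for all $X\in T\Gamma_i$, and invoke the symmetry of the shape operator. The only blemish is the sign in your expansion of $\mathbf{n}_i$: with the paper's orientation convention (cf.\ the identity $\cos\theta\,N+\sin\theta\,\nu=-e_{n+1}$ in the proof of Theorem 3.2) one has $\mathbf{n}_i=\cos\theta_i\,N+\sin\theta_i\,\nu_i$ rather than $\cos\theta_i\,N-\sin\theta_i\,\nu_i$, but this is immaterial since your argument only uses $\sin\theta_i\neq 0$.
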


\section{The results}

A key ingredient we will use is the following fact obtained in \cite{A-S}, we include the proof for completeness.

 \begin{proposition}\label{prop:normalintegral}
Let $\psi:\Sigma\to \Bbb R^{n+1}$ be a   smooth immersion in the Euclidean space $\Bbb R^{n+1}$ of a smooth compact orientable $n-$dimensional manifold $\Sigma$ with or without boundary. Let $N:\Sigma \to \Bbb S^n\subset\Bbb R^{n+1}$ be a global unit normal of  $\psi$ and $\nu$ the exterior unit  conormal to $\partial\Sigma$ in $\Sigma$. Then 
\begin{equation}\label{eq:normal}
n\int_{\Sigma} N \, dA=  \int_{\partial\Sigma} \{ \langle \psi,\nu\rangle N - \langle \psi,N\rangle \nu\} \,ds
\end{equation}
where $dA$ and $ds$ denote the volume elements  of $\Sigma$ and $\partial\Sigma,$ respectively.

In particular, if $\Sigma$ has no boundary, then
\begin{equation*}
\int_{\Sigma} N \, dA= {0}.
\end{equation*}

\end{proposition}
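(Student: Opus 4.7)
The plan is to reduce the vector identity to a scalar one by pairing both sides against an arbitrary constant vector $E \in \mathbb{R}^{n+1}$, so the claim becomes
\[
n\int_{\Sigma}\langle N, E\rangle\, dA = \int_{\partial\Sigma}\{\langle \psi,\nu\rangle\langle N,E\rangle - \langle \psi, N\rangle\langle \nu, E\rangle\}\, ds,
\]
which, once established for every $E$, immediately gives the vectorial version.

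To prove the scalar identity, I would construct a tangent vector field on $\Sigma$ whose boundary flux through $\nu$ is exactly the integrand on the right-hand side and whose intrinsic divergence equals $-n\langle N, E\rangle$. The natural candidate is
\[
X := \langle \psi, N\rangle\, E^{T} - \langle E, N\rangle\, \psi^{T},
\]
where $V^{T}$ denotes the projection of $V\in\mathbb{R}^{n+1}$ onto $T\Sigma$. Since $\nu$ is tangent to $\Sigma$, we have $\langle E^T,\nu\rangle = \langle E,\nu\rangle$ and $\langle \psi^T,\nu\rangle = \langle \psi,\nu\rangle$, so the boundary integral of $\langle X,\nu\rangle$ is the desired expression up to sign.

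The main work is to show $\mathrm{div}_{\Sigma}X = -n\langle N,E\rangle$. I would pick a local orthonormal frame $\{E_1,\dots,E_n\}$ on $\Sigma$ and use the facts that $D_{E_j}\psi = E_j$ (position vector), $D_{E_j}N \in T\Sigma$ (since $|N|=1$), and the symmetry of the Weingarten map $\langle D_X N, Y\rangle = \langle X, D_Y N\rangle$ for $X,Y \in T\Sigma$. Each term $\mathrm{div}_{\Sigma}(\langle \psi,N\rangle E^{T})$ and $\mathrm{div}_{\Sigma}(\langle E,N\rangle \psi^{T})$ separately generates a contribution involving the mean curvature of $\Sigma$, together with the "transport" terms $\langle \psi^T, D_{E^T}N\rangle$ and $\langle E^T, D_{\psi^T}N\rangle$; the first pair of contributions cancels because the $H$-terms appear with opposite signs, and the second pair cancels by the self-adjointness of the shape operator. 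The only surviving term is the one coming from $\mathrm{div}_{\Sigma}\psi^{T}$ through the factor $-n\langle E,N\rangle$, giving the desired $-n\langle N,E\rangle$.

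The main obstacle is essentially bookkeeping: carefully splitting each product into normal and tangential pieces and verifying that the two distinct sources of $H$-terms (from $\mathrm{div}_{\Sigma}E^{T}$ and $\mathrm{div}_{\Sigma}\psi^{T}$) annihilate each other, as does the pair of Weingarten cross-terms. Once this computation is done, a single application of the divergence theorem on $\Sigma$ yields the scalar identity, arbitrariness of $E$ gives \eqref{eq:normal}, and the closed case follows by taking $\partial\Sigma=\emptyset$.
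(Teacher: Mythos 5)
Your proposal is correct and follows essentially the same route as the paper: both test against an arbitrary constant vector, form the tangent field $\langle a,N\rangle\psi^{T}-\langle\psi,N\rangle a^{T}$ (yours is its negative), use $\div\,\psi^{T}=n+nH\langle\psi,N\rangle$, $\div\,a^{T}=nH\langle a,N\rangle$ and the symmetry of the Weingarten map to reduce the divergence to $n\langle a,N\rangle$, and conclude by the divergence theorem. The sign conventions work out consistently, so no changes are needed.
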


\begin{proof}
 Let $a$ be a constant vector field on $\Bbb R^{n+1}.$  Consider the following vector field on $\Sigma $
$$ X=  \langle a, N\rangle \psi^T -  \langle \psi,N\rangle { a}^T, $$
where,  $\psi^{T}={\psi}-\langle \psi,N\rangle N$  (resp. ${ a}^T= {a}-\langle a, N\rangle N$ ) is the projection of $\psi$ (resp. of $ a$) on $T\Sigma$. 
 Using  the following well known formulas that can de thoroughly checked:
$$ {\text {div}} \,\psi^{T} = n +nH\langle \psi,N\rangle,\qquad {\text {div}}\,
 { a}^T= n H \langle a, N\rangle,$$
we compute the divergence of $X$:
\begin{align*} {\text {div}}\, X&= \langle a, N\rangle \,{\text {div}}\, \psi^T + \langle a, D_{\psi^T} N\rangle - \langle \psi, N\rangle \text{div}\, {a}^T - \langle {a}^T, N\rangle -\langle \psi, D_{{a}^T} N\rangle \\
&=   n  \langle a, N\rangle + nH\langle \psi,N\rangle \langle a, N\rangle  +  \langle {a}^T, D_{\psi^T} N\rangle-n H
\langle {a}, N\rangle \langle \psi,N\rangle  -\langle \psi^T, D_{{a}^T} N\rangle \\
&= n\langle a, N\rangle,
\end{align*}
 where we used the symmetry of the Weingarten map, that is,  $\langle {a}^T, D_{\psi^T} N\rangle=\langle \psi^T, D_{{a}^T} N\rangle.$ 
  Integrating on $\Sigma$ and using the divergence theorem we get
 \begin{equation*}
 n\int_{\Sigma}\langle  a, N\rangle \, dA=  \int_{\partial\Sigma}  \{ \langle a, N\rangle\langle \psi,\nu\rangle  - \langle \psi,N\rangle \langle a, \nu \rangle\} ds
  \end{equation*}
 Since this is true for any $a$,   (\ref{eq:normal}) follows.
\end{proof}

We now prove our first result  characterizing the spherical caps as the only stable immersed capillary hypersurfaces in half-spaces in $\Bbb R^{n+1}.$ This was proved under additional assumptions on the boundary and the angle of contact in \cite{A-S}, \cite{choe-koiso} and \cite{marinov}. We note here that it is not necessary to assume the hypersurface in contained in a half-space, we only need  the fact that its meets a hyperplane at a constant angle, so we state the result in this more general form.

\begin{theorem}
\label{thm:half-space}
 Let $\psi:\Sigma\to \Bbb R^{n+1}$ be a capillary immersion  of a compact and connected orientable manifold $\Sigma$ of dimension $n$  in  $\Bbb R^{n+1},\, n\geq 2,$   supported by a hyperplane. If the immersion is stable then 
$\psi(\Sigma)$ 
is a spherical cap.

\end{theorem}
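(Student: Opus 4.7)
The strategy I would follow is the classical Barbosa--do Carmo test-function argument, adapted to the capillary setting by adding a correction involving the hyperplane's normal, and relying on Proposition \ref{prop:normalintegral} in place of any divergence-theorem argument that would require $\partial \Sigma$ to bound a region in the supporting hyperplane.

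After translating $\mathbb{R}^{n+1}$, I may assume the supporting hyperplane $P$ passes through the origin; let $\mathbf n$ denote its unit normal. Along $\Gamma:=\partial\Sigma\subset P$, the oriented frames $\{N,\nu\}$ and $\{\mathbf n,\bar\nu\}$ of $(T\Gamma)^\perp$ are related by a rotation of angle $\theta$, so
\[
N=\cos\theta\,\mathbf n+\sin\theta\,\bar\nu,\qquad \nu=-\sin\theta\,\mathbf n+\cos\theta\,\bar\nu,
\]
and, since $\langle\psi,\mathbf n\rangle=0$ on $\Gamma$, $\langle\psi,\nu\rangle=\cos\theta\,\langle\psi,\bar\nu\rangle$ and $u:=\langle\psi,N\rangle=\sin\theta\,\langle\psi,\bar\nu\rangle$ along $\Gamma$.

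As test function I would choose $F:=1+Hu-\cos\theta\,\langle\mathbf n,N\rangle$. To check admissibility $\int_\Sigma F\,dA=0$, I combine the Minkowski identity $\int_\Sigma(n+nHu)\,dA=\int_\Gamma\langle\psi,\nu\rangle\,ds$, obtained by integrating $\operatorname{div}\psi^T=n+nHu$, with Proposition \ref{prop:normalintegral} applied to $v=\mathbf n$; the boundary simplifications above collapse the right-hand side of the latter to $\int_\Gamma\langle\psi,\bar\nu\rangle\,ds$, and the two identities together yield $\int_\Sigma F\,dA=0$. Using $\Delta u=nH-|\sigma|^2u$ and $\Delta\langle v,N\rangle=-|\sigma|^2\langle v,N\rangle$ (both valid since $H$ is constant), a direct computation gives $LF:=\Delta F+|\sigma|^2F=nH^2+|\sigma|^2$. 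For the boundary term, Proposition \ref{lem:umbilicity} gives $\partial_\nu\langle v,N\rangle=-\sigma(\nu,\nu)\langle v,\nu\rangle$ for every constant $v$, and a short calculation shows that along $\Gamma$,
\[
F=\sin\theta\bigl(\sin\theta+H\langle\psi,\bar\nu\rangle\bigr),\qquad \partial_\nu F-qF=-2\cos\theta\,\sigma(\nu,\nu)\bigl(\sin\theta+H\langle\psi,\bar\nu\rangle\bigr),
\]
hence $F(\partial_\nu F-qF)=-\sin(2\theta)\,\sigma(\nu,\nu)(\sin\theta+H\langle\psi,\bar\nu\rangle)^2$. Combining stability $\mathcal I(F,F)\ge0$ with $\int_\Sigma F\,dA=0$ (which kills the $nH^2$ contribution in $\int F\cdot LF$) then produces
\[
\int_\Sigma F(|\sigma|^2-nH^2)\,dA+\sin(2\theta)\int_\Gamma\sigma(\nu,\nu)\bigl(\sin\theta+H\langle\psi,\bar\nu\rangle\bigr)^2\,ds\le 0.
\]

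Since $|\sigma|^2\ge nH^2$ by Cauchy--Schwarz with equality iff $\Sigma$ is totally umbilical, umbilicity will follow once both terms on the left are shown to be nonnegative. Establishing these signs, i.e., $F\ge0$ on $\Sigma$ and $\sin(2\theta)\,\sigma(\nu,\nu)\ge0$ along $\Gamma$, is the main obstacle. My plan for this is to combine the inequality above with the complementary family of admissible test functions $f_v:=\langle v,N\rangle$ for $v\in P$, which are automatically mean-zero by Proposition \ref{prop:normalintegral}: summing $\sum_i\mathcal I(f_{e_i},f_{e_i})\ge0$ over an orthonormal basis $\{e_i\}$ of $P$ reduces, via the frame identities and the Jacobi equation $Lf_v=0$, to $-\sin(2\theta)\int_\Gamma\sigma(\nu,\nu)\,ds\ge0$. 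Playing these two inequalities against each other should force $|\sigma|^2\equiv nH^2$, i.e., total umbilicity. Since $\Sigma$ is connected, compact, and of dimension $n\ge 2$, $\psi(\Sigma)$ is then a portion of either a round sphere or a hyperplane; the flat case is ruled out by the fixed contact angle $\theta\in(0,\pi)$ together with $H\ge0$, so $\psi(\Sigma)$ is a spherical cap.
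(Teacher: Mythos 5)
Your setup is the right one (the Barbosa--do Carmo test function corrected by the hyperplane normal, with Proposition \ref{prop:normalintegral} supplying admissibility), but there are two genuine problems. First, your frame relations along $\Gamma$ carry a sign error: since $\nu$ is the \emph{exterior} conormal of $\Sigma$ and $\mathbf n$ the exterior normal of the half-space containing $\Sigma$, one has $\langle\nu,\mathbf n\rangle=+\sin\theta$, i.e.\ $\mathbf n=\cos\theta\,N+\sin\theta\,\nu$ and $\bar\nu=-\sin\theta\,N+\cos\theta\,\nu$, so that $\langle\psi,N\rangle=-\sin\theta\,\langle\psi,\bar\nu\rangle$ while $\langle\psi,\nu\rangle=+\cos\theta\,\langle\psi,\bar\nu\rangle$ (check it on a hemisphere sitting on $P$). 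Your relations give these the same sign, and that relative sign is exactly what makes the Robin boundary term cancel: with the correct frame one finds $\partial_\nu F=\cot\theta\,\sigma(\nu,\nu)\,F$, i.e.\ $\partial_\nu F-qF\equiv 0$, and the entire boundary integral you are struggling to control is not there. The nonzero term $-2\cos\theta\,\sigma(\nu,\nu)(\cdots)$ is an artifact of the sign error, and the "plan" you propose to handle it is a dead end: with the correct conventions the functions $f_v=\langle v,N\rangle$, $v$ tangent to $P$, also satisfy $\partial_\nu f_v-qf_v\equiv0$, so $\mathcal I(f_v,f_v)=0$ identically and summing over a basis yields $0\ge0$, no information.

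Second, and more seriously, even after the boundary term is removed you are left with $\mathcal I(F,F)=-\int_\Sigma(|\sigma|^2-nH^2)\bigl(1+H\langle\psi,N\rangle-\cos\theta\langle\mathbf n,N\rangle\bigr)\,dA$, and your route to a sign --- proving $F\ge0$ pointwise --- cannot work: $H\langle\psi,N\rangle$ has no pointwise sign (already for pieces of spheres it is a constant that can be $\le -1$ up to translation), so $F$ changes sign in general. The missing idea, which is the core of the paper's argument, is that the troublesome term integrates to zero against the umbilicity defect: one shows $\int_\Sigma(|\sigma|^2-nH^2)H\langle\psi,N\rangle\,dA=0$ by integrating $\Delta\bigl(H|\psi|^2+2\langle\psi,N\rangle\bigr)=2(nH^2-|\sigma|^2)\langle\psi,N\rangle$, rewriting the resulting boundary integral via $\sigma(\nu,\nu)=nH+(n-1)\sin\theta\,H_{\partial\Sigma}$, and then killing it with the Minkowski formula for $\psi|_{\partial\Sigma}$ in $P$ together with the balancing identity $\int_{\partial\Sigma}\nu\,ds=nH\int_\Sigma N\,dA$ and Proposition \ref{prop:normalintegral} (this is where the origin being placed on $P$ is used). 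After that step $\mathcal I(F,F)=-\int_\Sigma(|\sigma|^2-nH^2)(1-\cos\theta\langle\mathbf n,N\rangle)\,dA\le0$ with a strictly positive weight, and stability forces umbilicity. Without this cancellation your argument does not close.
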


\begin{proof}

We denote, as usual, by  $e_1,\ldots,e_{n+1}$  the vectors of the canonical basis of $\Bbb R^{n+1}$. Without loss of generality, we may suppose the supporting hyperplane is the hyperplane  $\{ x_{n+1}= 0\}$ oriented by the unit normal 
$ -e_{n+1}.$ The starting point is to derive a suitable test function inspired by the work of Barbosa-do Carmo \cite{barbosa-do carmo} proving that round spheres are the only compact and closed stable CMC hypersurfaces in $\Bbb R^{n+1}.$ 

 Integrating the equation 
 $ {\text {div}} \,\psi^{T} = n +nH\langle \psi,N\rangle,$
 we get
 
\begin{equation}\label{eq1}
\int_{\partial \Sigma} \langle \psi,\nu\rangle ds = n\int_{\Sigma} \{1+H\langle \psi,N\rangle\}dA.
\end{equation} 
  
On $\partial\Sigma$, we have  $\, \cos\theta\, N+\sin\theta\, \nu=-e_{n+1}, \,\langle \psi,\nu\rangle=\cos\theta \langle \psi,\bar\nu\rangle$ and  
$\langle \psi,N\rangle=-\sin\theta \langle \psi,\bar\nu\rangle.$  Proposition \ref {prop:normalintegral} gives in this case

  \begin{equation}\label{eq:normalintegral}
 n \cos\theta\int_{\Sigma} N\, dA = -\left( \int_{\partial\Sigma} \langle \psi,\nu\rangle\, \,ds\right)\, e_{n+1}.
  \end{equation}

  From (\ref{eq1}) and (\ref{eq:normalintegral}), we conclude that:
\begin{equation*}
\int_{\Sigma} \{1+H\langle \psi, N\rangle +\cos\theta \langle N,e_{n+1}\rangle\} \,dA =0.
\end{equation*}
So we may use  $\phi:=1+H\langle \psi, N\rangle +\cos\theta \langle N,e_{n+1}\rangle$ as a test  function in the stability inequality.
Set $u=\langle \psi,N\rangle$ and $v=\langle N,e_{n+1}\rangle.$ These functions satisfy the following well-known formulas
\begin{equation}\label{eq3}
\Delta u +|\sigma|^2 u =-nH,
\end{equation}
and 
\begin{equation}\label{eq4}
\Delta v+ |\sigma|^2 v =0.
\end{equation}
Using these equations, we  compute:
\begin{align*}
\Delta \phi&= H(-nH-|\sigma|^2 u)-\cos\theta \, |\sigma|^2 v\\
&=-nH^2-|\sigma|^2(H u+\cos\theta \, v)\\
&=-nH^2-|\sigma|^2(\phi-1).
\end{align*}
Therefore
\begin{equation*}
\Delta\phi+ |\sigma|^2\phi =(|\sigma|^2-nH^2).
\end{equation*}
Moreover:
\begin{equation*}
\frac{\partial u}{\partial \nu}= \langle \nu, N\rangle + \langle \psi, D_{\nu} N\rangle=-\sigma(\nu,\nu)\langle \psi,\nu\rangle\\
\end{equation*}
and 
\begin{equation*} 
\frac{\partial v}{\partial \nu}= \langle D_{\nu}N, e_{n+1}\rangle=-\sigma(\nu,\nu)\langle\nu,e_{n+1}\rangle=\sigma(\nu,\nu)\sin\theta,
\end{equation*}
 Now, taking into account that, along $\partial\Sigma: \,\langle \psi,\nu\rangle=\cos\theta \langle \psi,\bar\nu\rangle$ and  
$\langle \psi,N\rangle=-\sin\theta \langle \psi,\bar\nu\rangle,$    one can check  after direct computations that:
\begin{equation}\label{eq:normalderivative}
\frac{\partial \phi}{\partial \nu}= \cot\theta\, \sigma(\nu,\nu) \,\phi.
\end{equation}
It follows that
\begin{equation}\label{eq7}
\mathcal I(\phi,\phi)= -\int_{\Sigma} (|\sigma|^2-nH^2)\phi\, dA= -\int_{\Sigma} (|\sigma|^2-nH^2)(1+H\langle \psi,N\rangle
 +\cos\theta \langle N,e_{n+1}\rangle) \, dA.
\end{equation}
We now introduce  the following function:
$$ F=H|\psi|^2+2\langle \psi,N\rangle.$$
One has $\Delta|\psi|^2= 2n(1+H\langle\psi,N\rangle)$ and $\Delta  \langle\psi,N\rangle=-|\sigma|^2\langle\psi,N\rangle -nH,$ so that 
\begin{align*} \Delta F &= 2nH(1+H\langle \psi,N\rangle)+2(-|\sigma|^2\langle \psi,N\rangle -nH)\\
&=2(nH^2-|\sigma|^2)\langle\psi,N\rangle
\end{align*}
Integrating we get
\begin{equation*}  2\int_{\Sigma} (nH^2-|\sigma|^2)\langle \psi,N\rangle \,dA = \int_{\Sigma} \Delta F\, dA
=\int_{\partial\Sigma}\frac{\partial F}{\partial\nu}\, ds,
\end{equation*}
that is,
\begin{equation}\label{special function} 
\int_{\Sigma} (nH^2-|\sigma|^2)\langle \psi,N\rangle \,dA = \int_{\partial\Sigma} (H-\sigma(\nu,\nu))\langle\psi,\nu\rangle\, ds.
\end{equation}

Denote by $H_{\partial\Sigma}$ the mean curvature of   the immersion $\psi |_{\partial \Sigma}:\partial \Sigma\longrightarrow \Bbb R^n\times\{0\}$ computed with respect to the unit normal 
$\bar\nu.$ We claim that, on $\partial\Sigma:$ 
\begin{equation}\label{mean curv} H-\sigma(\nu,\nu)=-(n-1)(H+\sin\theta\, H_{\partial\Sigma}).
\end{equation}
Indeed, 
let $i\in\{1,\ldots ,k\}$ and $\{v_1,\ldots,v_{n-1}\}$ be a local orthonormal frame on  $\partial\Sigma.$ Then:
\begin{equation*} 
\sigma(\nu,\nu)=nH-\sum_{j=1}^{n-1} \sigma(v_j,v_j).
\end{equation*}
Now, considering the unit normal $\bar\nu$  to  $\partial\Sigma$ in $\Bbb R^n\times\{0\}$ along $\psi$, as chosen in Sec.\ 2, we have $ N=-\sin\theta\,\bar\nu - \cos\theta \, e_{n+1}.$ We can thus write 
\begin{equation*}\sigma(v_j,v_j)=-\langle \nabla_{v_j} N, v_j\rangle=\sin\theta\, \langle \nabla_{v_j} \bar\nu, v_j\rangle.
\end{equation*}
Therefore the following  relation holds on $\partial\Sigma$
\begin{equation*}\label{eq:sigma}
\sigma(\nu,\nu)=nH+(n-1)\sin\theta\, H_{\partial\Sigma}.
\end{equation*}
This proves (\ref{mean curv}).
Using  (\ref{mean curv}) and the fact that $\langle \psi,\nu\rangle =\cos\theta \langle\psi,\bar\nu\rangle, $ we can write (\ref{special function}) as follows:
\begin{equation}\label{middle intergal}\int_{\Sigma} (nH^2-|\sigma|^2)\langle \psi,N\rangle \,dA= -(n-1)\cos\theta \int_{\partial\Sigma}(H+\sin\theta \, H_{\partial\Sigma})\langle\psi,\bar\nu\rangle \,ds.
\end{equation}

We claim that 
 \begin{equation}\label{claim} \int_{\partial\Sigma}(H+\sin\theta \, H_{\partial\Sigma})\langle\psi,\bar\nu\rangle\,ds=0.
 \end{equation}

To see this, we first apply Minkowski formula  to the immersion $\psi |_{\partial \Sigma}:\partial \Sigma\longrightarrow \Bbb R^n\times\{0\},$ obtaining:
$\int_{\partial\Sigma} H_{\partial\Sigma}\langle\psi,\bar\nu\rangle ds=-|\partial\Sigma|.$ Thus
$$\int_{\partial\Sigma}(H+\sin\theta \, H_{\partial\Sigma})\langle\psi,\bar\nu\rangle\,ds= H\int_{\partial\Sigma}\langle\psi,\bar\nu\rangle -\sin\theta |\partial\Sigma|.$$
Integrating the relation $\Delta \psi =nH N,$ we obtain
\begin{equation*}
\int_{\partial\Sigma} \nu\, ds= nH\int_{\Sigma} N\,dA.
\end{equation*}
Therefore
\begin{equation*}
\int_{\partial\Sigma} \langle \nu,e_{n+1}\rangle \, ds= nH\int_{\Sigma}\langle  N,e_{n+1}\rangle\,dA,
\end{equation*}
that is,
\begin{equation}\label{eq5}
-\sin\theta\,|\partial\Sigma|= nH\int_{\Sigma} \langle  N,e_{n+1}\rangle\,dA.
\end{equation}

Along
 $\partial\Sigma,$ we have  $\langle \psi,\nu\rangle=\cos\theta\langle \psi, \bar\nu\rangle$ and $\langle  \psi, N\rangle =-\sin\theta \langle \psi, \bar\nu\rangle.$   So 
  (\ref{eq:normal})  gives 
\begin{equation}\label{equation 6}
 n\int_{\Sigma} \langle  N,e_{n+1}\rangle\,dA=-\int_{\partial\Sigma} \langle \psi, \bar\nu\rangle\,ds.
\end{equation}
 (\ref{eq5}) and (\ref{equation 6}) show that 
 $$H\int_{\partial\Sigma}\langle\psi,\bar\nu\rangle \,ds -\sin\theta |\partial\Sigma|=0.$$
This proves (\ref{claim}). So (\ref{middle intergal}) gives 
$$H\int_{\Sigma} (nH^2-|\sigma|^2)\langle \psi,N\rangle=0.$$
Going back to (\ref{eq7}), we get
$$\mathcal I(\phi,\phi)= -\int_{\Sigma} (|\sigma|^2-nH^2) (1+\cos\theta\langle N,e_{n+1} \rangle).$$
 As $\theta\in(0,\pi),$ we  have $1+\cos\theta\langle N,e_{n+1} \rangle > 0.$ Also
 $|\sigma|^2-nH^2\geq 0$ with equality only at umbilics. Hence $\mathcal I(\phi,\phi)\leq 0.$ However, by the stability assumption,  $\mathcal I(\phi,\phi)\geq 0.$ We conclude that necessarily $|\sigma|^2-nH^2\equiv 0$ and so 
 $\psi(\Sigma)$ is totally umbilical, that is, a spherical cap.
 \end{proof}


\begin{remark}  More generally we can consider capillary immersions in $\Bbb R^{n+1}$ supported by a hyperplane so that the angle of contact  is locally constant, that is,  this angle is constant along each boundary component.  It is an open problem to decide if spherical caps are the only stable capillary immersions of this type.  Using the argument of Theorem 3.1 in \cite{A-S} (see also \cite{marinov}) one shows this is the case  for surfaces of genus zero in $\Bbb R^3$.
\end{remark}

In the  proof  of Theorem \ref{thm:half-space} we strongly used  in our computations that the origin lies on the supporting hyperplane. This explains the hypothesis 
in our second result dealing with a domain bounded by a finite family of hyperplanes. It was inspired by the work of Li-Xiong \cite{li-xiong}.

\begin{theorem}
\label{thm:planar boundaries}
 Let $\mathcal B$ be a domain in $\Bbb R^{n+1}$ bounded by $k$ hyperplanes  $P_1,\ldots, P_k,\\ 2\leq k\leq n+1,$ having linearly independent normals. Then there exists a constant $\delta>0$ so that  if $\psi:\Sigma\to \mathcal B$  is a stable  compact and connected orientable immersed capillary hypersurface  in $\mathcal B$ not touching the edges of $\mathcal B$ and having a contact   angle $\theta_i\in (\frac{\pi}{2}-\delta
,\frac{\pi}{2}+ \delta)$ with
 $P_i,$ then  $\psi(\Sigma)$ is a part of a sphere. 

\end{theorem}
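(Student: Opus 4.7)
My plan is to generalize the test function from the proof of Theorem \ref{thm:half-space} by introducing one extra term per bounding hyperplane, and then handle the deviation of the contact angles from $\pi/2$ as a perturbation. First, since the normals $\mathbf{n}_i$ are linearly independent, $\bigcap_{i=1}^k P_i$ is a nonempty affine subspace; place the origin there, so that $\langle\psi,\mathbf{n}_i\rangle=0$ along each $\Gamma_i.$ Consider the candidate test function
\[ \phi=1+H\langle\psi,N\rangle+\sum_{i=1}^k c_i\,\langle N,\mathbf{n}_i\rangle, \]
with constants $c_i\in\Bbb R$ to be chosen so that $\phi$ is admissible. Requiring the Robin condition $\partial\phi/\partial\nu_i=\cot\theta_i\,\sigma(\nu_i,\nu_i)\,\phi$ on each $\Gamma_i$ (so as to kill the boundary term in $\mathcal I(\phi,\phi)$), a direct computation using Proposition \ref{lem:umbilicity} and the identity $\nu_i+\cot\theta_i N=\mathbf{n}_i/\sin\theta_i$ reduces the condition to the linear system
\[ Ac=-(\cos\theta_1,\ldots,\cos\theta_k)^T,\qquad A_{ij}:=\langle\mathbf{n}_i,\mathbf{n}_j\rangle. \]
The Gram matrix $A$ is positive definite (by independence of the normals), so this system has a unique solution, and $|c|_2\leq\sqrt{k}\,\|A^{-1}\|\sin\delta$ since $|\cos\theta_i|\leq\sin\delta.$ One verifies that the same linear system also ensures $\int_\Sigma\phi\,dA=0,$ by integrating $\mathrm{div}\,\psi^T=n+nH\langle\psi,N\rangle$ and pairing Proposition \ref{prop:normalintegral} against each $\mathbf{n}_i.$ Thus $\phi$ is an admissible test function.

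Using $\Delta\langle\psi,N\rangle+|\sigma|^2\langle\psi,N\rangle=-nH$ and $\Delta\langle N,\mathbf{n}_i\rangle+|\sigma|^2\langle N,\mathbf{n}_i\rangle=0,$ one finds $\Delta\phi+|\sigma|^2\phi=|\sigma|^2-nH^2,$ and therefore
\[ \mathcal I(\phi,\phi)=-T-HU-\sum_{i=1}^k c_iV_i, \]
with $T=\int_\Sigma(|\sigma|^2-nH^2)dA\geq 0,$ $U=\int_\Sigma(|\sigma|^2-nH^2)\langle\psi,N\rangle\,dA,$ and $V_i=\int_\Sigma(|\sigma|^2-nH^2)\langle N,\mathbf{n}_i\rangle\,dA.$ The main step is to show $U=0.$ Using the auxiliary function $F=H|\psi|^2+2\langle\psi,N\rangle$ exactly as in the proof of Theorem \ref{thm:half-space}, combined with Minkowski's formula applied to each $\Gamma_j\hookrightarrow P_j\cong\Bbb R^n,$ one obtains
\[ U=(n-1)\sum_{j=1}^k\cos\theta_j\Bigl(H\!\int_{\Gamma_j}\langle\psi,\bar\nu_j\rangle\,ds-\sin\theta_j|\Gamma_j|\Bigr). \]
To kill this term by term, I prove the per-$j$ identity $H\!\int_{\Gamma_j}\langle\psi,\bar\nu_j\rangle\,ds=\sin\theta_j|\Gamma_j|.$ Integrating $\Delta\psi=nHN$ over $\Sigma$ gives $\int_{\partial\Sigma}\nu\,ds=nH\int_\Sigma N\,dA;$ pairing with $\mathbf{n}_i$ and using $\int_{\Gamma_j}\bar\nu_j\,ds=0$ (Stokes on the closed immersed hypersurface $\Gamma_j$ of $P_j\cong\Bbb R^n$) yields $\sum_j A_{ji}\sin\theta_j|\Gamma_j|=nHI_i,$ where $I_i=\int_\Sigma\langle N,\mathbf{n}_i\rangle\,dA.$ Pairing Proposition \ref{prop:normalintegral} with $\mathbf{n}_i$ also gives $nI_i=\sum_j A_{ji}\int_{\Gamma_j}\langle\psi,\bar\nu_j\rangle\,ds.$ Subtracting these two and applying $A^{-1}$ decouples the resulting $k$ equations into the desired identity for each $j$ individually. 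Hence $U=0.$

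For the remaining terms, $|\langle N,\mathbf{n}_i\rangle|\leq 1$ and $|\sigma|^2-nH^2\geq 0$ yield $|V_i|\leq T.$ Hence by Cauchy--Schwarz,
\[ \Bigl|\sum_i c_iV_i\Bigr|\leq|c|_2\,\sqrt{\sum_i V_i^2}\leq\sqrt{k}\,\|A^{-1}\|\sin\delta\cdot\sqrt{k}\,T=k\,\|A^{-1}\|\sin\delta\cdot T, \]
so $\mathcal I(\phi,\phi)\leq -T\bigl(1-k\|A^{-1}\|\sin\delta\bigr).$ Choosing $\delta>0$ with $k\|A^{-1}\|\sin\delta<1$ (which depends only on $A,$ hence only on $P_1,\dots,P_k$), stability $\mathcal I(\phi,\phi)\geq 0$ forces $T=0,$ so $|\sigma|^2\equiv nH^2,$ $\psi$ is totally umbilical, and $\psi(\Sigma)$ is a piece of a sphere. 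The main obstacle I anticipate is the per-$j$ identity $H\!\int_{\Gamma_j}\langle\psi,\bar\nu_j\rangle\,ds=\sin\theta_j|\Gamma_j|$: in the single-hyperplane setting ($k=1$) this is just one scalar equation, but with several hyperplanes only a coupled system of $k$ scalar identities appears directly, and the decisive ingredient (and the only essential use of linear independence of the normals) is the invertibility of the Gram matrix $A,$ which decouples the system and yields one identity per boundary component.
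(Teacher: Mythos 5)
Your proposal is correct and follows essentially the same route as the paper: the test function $\phi=1+H\langle\psi,N\rangle+\sum_i c_i\langle N,\mathbf{n}_i\rangle$ is exactly the paper's $1+H\langle\psi,N\rangle+\langle a,N\rangle$ with $a=\sum_i c_i\mathbf{n}_i$ determined by $\langle a,\mathbf{n}_i\rangle=-\cos\theta_i$, and the decoupling of the boundary identities via the invertible Gram matrix is the same use of linear independence as in the paper, which simply equates coefficients in the basis $\{\mathbf{n}_i\}$. The only (cosmetic) differences are that you produce an explicit $\delta$ from the bound $k\|A^{-1}\|\sin\delta<1$ where the paper argues by continuity that $|a|<1$ for $\theta_i$ near $\pi/2$, and that you omit the one-line remark ruling out the totally geodesic (planar) case, which the paper excludes because a hyperplane piece would meet the edges of $\mathcal B$.
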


\begin{proof}
Let  $\bf{n_i}$ denote, as above, the exterior unit normal to   $\mathcal B$ along $P_i.$   The independence of $\bf{n_1},\ldots, \bf{n_k}$   ensures that $\cap_{i=1}^k P_i\neq \emptyset$ and so we may assume, without loss of generality, that the origin lies in $\cap_i^k P_i.$  Following  \cite{li-xiong}, we consider  a linear combination $a:=\sum_{i=1}^n c_i{\bf n_i}$ 
verifying: $\langle a,{\bf{n}}_i \rangle= -\cos\theta_i, \, i=1,\ldots ,k.$ The vector  $a$ exists and is unique by the independence of
 ${\bf{n}}_1,\ldots, {\bf{n}}_k.$ We consider the function 
 $$\phi=  1+H\langle \psi,N\rangle +\langle a,N\rangle.$$
  Proceeding as in the proof of Theorem \ref{thm:half-space}, one shows that 
 $\int_{\Sigma} \phi \,dA=0$  and 
 $$\mathcal I(\phi,\phi) = -\int_{\Sigma} (|\sigma|^2-nH^2) \phi\, dA= -\int_{\Sigma} (|\sigma|^2-nH^2) (1+H\langle \psi, N\rangle + \langle a, N\rangle) \, dA.$$

We claim that
\begin{equation}\label{middle intergal2} \int_{\Sigma} (|\sigma|^2-nH^2) H\langle \psi, N\rangle\,dA=0.
\end{equation}

To prove this, write $\partial\Sigma=\cup_{i=1}^k \Gamma_i,$ with $\psi(\Gamma_i)\subset P_i.$  As in the proof of Theorem \ref{thm:half-space}, considering the function $F=H|\psi|^2+2\langle\psi,N\rangle,$ one shows that
\begin{equation}\label{integral 0} 
\int_{\Sigma} (nH^2-|\sigma|^2)\langle\psi,N\rangle \,dA= -(n-1) \sum_{i=1}^k \cos\theta_i\int_{\Gamma_i}(H+\sin\theta_iH_{\Gamma_i})\langle\psi,\bar\nu_i\rangle\, ds,
\end{equation}
where $H_{\Gamma_i}$ is the mean curvature of the immersion $\psi |_{\Gamma_i}:\Gamma_i \longrightarrow P_i$ with respect to the unit normal $\bar\nu_i.$  As before,  Proposition \ref{prop:normalintegral}  together with  the relation $\langle \psi,\nu_i\rangle=\cos\theta\langle \psi, \bar\nu_i\rangle$ on $\Gamma_i,$ give
\begin{equation}\label{integral 1}
n\int_{\Sigma} N\,dA= \sum_{i=1}^k \left(\int_{\Gamma_i}\langle\psi,\bar\nu_i\rangle\,dA\right){\bf n_i}.
\end{equation}
Integrating the relation $\Delta\psi=nHN,$ we obtain
\begin{equation*} nH\int_{\Sigma} N\,dA= \sum_{i=1}^k \int_{\Gamma_i} \nu_i\,ds=
\sum_{i=1}^k \int_{\Gamma_i}(\cos\theta_i\,\bar\nu_i+\sin\theta_i\,{\bf n_i})\,ds.
\end{equation*}
By Proposition \ref{prop:normalintegral} applied to  the immersion $\psi |_{\Gamma_i:} \Gamma_i\longrightarrow P_i,$ one has $\int_{\Gamma_i}\bar\nu_i\,ds=0.$ So,
\begin{equation}\label{integral 2}
nH\int_{\Sigma} N\,dA= \sum_{i=1}^k  \sin\theta_i |\Gamma_i|{\bf n_i}.
\end{equation}
 From (\ref{integral 1}), (\ref{integral 2}) and the independence of the ${\bf n_i}$'s, we infer that 
 \begin{equation}\label{integral 3} \int_{\Gamma_i}H\langle\psi,\bar\nu_i\rangle\,ds =\sin\theta_i |\Gamma_i|.
 \end{equation}
 
Applying Minkowski formula to the immersion $\psi |_{\Gamma_i}: \Gamma_i\longrightarrow P_i,$ we get
\begin{equation}\label{integral 4}
|\Gamma_i|= -\int_{\Gamma_i} H_{\Gamma_i} \langle \psi,\bar\nu_i\rangle\,ds.
\end{equation}
(\ref{integral 3}) and (\ref{integral 4}) give
$$\int_{\Gamma_i}(H+\sin\theta_iH_{\Gamma_i})\langle\psi,\bar\nu_i\rangle\, ds=0.$$
Together with (\ref{integral 0}) this proves (\ref{middle intergal2}).
Summarizing, we have proved that 
$$\mathcal I(\phi,\phi) = -\int_{\Sigma} (|\sigma|^2-nH^2) (1 + \langle a, N\rangle) \, dA.$$
As in the proof of Theorem \ref{thm:half-space}, we can conclude that $\psi(\Sigma)$ is totally umbilical provided $|a|<1.$ 
Now, recall that $a$ is the unique solution, in the linear space spanned by ${\bf n_1,\dots,n_k},$ to the  system of linear equations $\langle a,{\bf n_i}\rangle=-\cos\theta_i, \,i=1,\ldots,k,$
and depends continuously on the $\theta_i$'s. When $\theta_i=\frac{\pi}{2},$ for $ i=1,\ldots,k,$ the solution to the system  is $a=0.$ Therefore 
there exists $\delta>0$ such that if $\theta_i\in (\frac{\pi}{2}-\delta,\frac{\pi}{2}+\delta), i=1,\ldots,k,$ then $|a|<1$ and $\psi(\Sigma)$ is totally umbilical. Note that $\psi(\Sigma)$ cannot be planar because otherwise it would  not be disjoint from  the edges of $\mathcal B.$ Therefore $\psi(\Sigma)$ is a piece of a sphere.
\end{proof}

Conversely, a part of a sphere as above is stable and even a minimizer of the energy functional $E$  among all embedded hypersurfaces in
$\mathcal B$ enclosing the same volume, see \cite{zia et al.}.

In the free boundary case, that is, when $\theta_i=\frac{\pi}{2},$ for each $i=1,\ldots,k,$  the only possibility for a piece of a sphere to meet orthogonally the hyperplanes $P_i$ is to be centered at a point in the intersection $\cap_{i=1}^k P_i.$ 
The conclusion of  Theorem \ref{thm:planar boundaries} is therefore that a stable  hypersurface with free boundary  has to touch some of the edges of $\mathcal B.$ The case of a wedge, i.e when $k=2,$ was considered by Lopez \cite{lopez}.

It is possible to restate Theorem \ref{thm:planar boundaries} for immersions not necessarily contained in the domain bounded by the hyperplanes and possibly touching its edges. More precisely,  let   $P_1,\dots, P_k, 2\leq k\leq n+1$ be a family of hyperplanes in $\Bbb R^{n+1},$ oriented by given unit normals $\bf{n_1},\dots,\bf{n_k},$ respectively. We consider  capillary immersions  $\psi:\Sigma \longrightarrow \Bbb R^{n+1}$  with $\partial \Sigma$ decomposed into a  union $\cup_{i=1}^k \Gamma_i$ of (not necessarily connected) submanifolds with $\psi(\Gamma_i)\subset P_i$ and 
$\measuredangle (N,{\bf n_i}) =\theta_i$ along $\Gamma_i,\, i=1,\dots,k.$ As above, $N$ denotes the unit normal to the immersion for which the mean curvature verifies  $H\geq 0.$  We note that  this definition excludes, for instance,  the part of a round sphere in a wedge and centered at its edge.  Then  the  conclusion  of Theorem \ref{thm:planar boundaries} is valid for such immersions.

\end{document}